\documentclass[a4paper]{amsart}
\usepackage{amsfonts,amscd,amssymb,amsmath,amsthm,mathrsfs,multirow,lscape,xfrac}
\usepackage{verbatim}
\usepackage{changepage}
\usepackage{color}

\theoremstyle{plain}
\newtheorem{theorem}{Theorem}[section]

\newtheorem{conjecture}[theorem]{Conjecture}
\newtheorem{cor}[theorem]{Corollary}
\newtheorem{def-thm}[theorem]{Definition-Theorem}

\theoremstyle{definition}
\newtheorem{example}[theorem]{Example}

\theoremstyle{definition}

\newtheorem{remark}[theorem]{Remark}

\def\min{\mathop{\mathrm{min}}}

\def\ZZ{\mathbb Z}

\def\pp{\mathbb{P}}
\def\qq{\mathbb{Q}}
\def\rr{\mathbb{R}}

\def\iter#1#2{#1^{\circ #2}}
\def\id{\mathrm{id}}

\allowdisplaybreaks

\title[]{On higher dimensional Integrality and multiplicative dependence in semi-group algebraic dynamics}

\author{Jorge Mello}
\address{Department of Mathematics and Statistics, Oakland University, Michigan, USA}
\email{jorgedemellojr@oakland.edu}

\author{Yu Yasufuku}
\address{Department of Mathematics, School of Education, Waseda University, Tokyo, Japan}
\email{yasufuku@waseda.jp}

\begin{document}

\begin{abstract}
    We study multiplicative dependence of points in semi-group orbits in higher-dimensions.  More specifically, we show that the non-density of integral points in semi-group orbits implies sparsity of multiplicative dependence in orbits. This can be viewed as a semi-group dynamical and a higher-dimensional version of recent results by B\'{e}rczes--Ostafe--Shparlinski--Silverman, which in turn can be viewed as a generalization of theorems of Northcott and Siegel.  We also confirm that the non-density hypothesis of integral points in orbits is implied by Vojta's conjecture.
\end{abstract}

\maketitle

\section{Introduction}
Starting with Silverman \cite{S1}, integral points in orbits under rational maps have been studied extensively.  For example, \cite{HS} has studied how the number of integral points varies depending on various parameters, and \cite{CHT, GH} have also studied this problem over function fields and fields of bounded degree, to cite a few.  Moreover, the first author \cite{M2} treated a generalization to the setting of semi-group dynamics. The arithmetic dynamics of a single map is analogous to a rank-one abelian variety, and the full analogy with abelian varieties is in some sense the semi-group dynamics of multiple maps.  Further, several authors \cite{Y,Y2, Mat, GN, CSTZ, M3} have studied the higher-dimensional analogs.

Some of the integrality results in dimension 1 have implications on the multiplicative dependence of elements in an orbit of a point. Namely, in \cite{BOSS, OSSZ}, the authors prove finiteness results for algebraic numbers whose iterates by a rational function are multiplicatively dependent. For example, when $k$ is a number field, $f(x)\in k(x)$ has degree at least $2$ and is not a constant multiple of power maps, and $\Gamma \subset k^*$ is a finitely-generated group,
\[
\left\{(n, m, \alpha, u): \alpha\in \pp^1(k), u\in \Gamma, (f^{\circ n}(\alpha))^{r} =u (f^{\circ m}(\alpha))^{s}, n \gg m \right\}
\]
is finite for any fixed nonzero integers $r,s$, where we denote by $f^{\circ n}$ the $n$-th iterate of $f$. Their results were recently generalized to multiplicative dependence modulo approximate finitely generated groups \cite{BBGMOS}. These types of results can be viewed as generalizations of Siegel's theorem on the finiteness of elements with algebraic integer images under a rational function and also of Northcott's Theorem on the finiteness of preperiodic points.

In this paper, we generalize these results to multiplicative dependence under multiple maps on higher-dimensional varieties.  Unlike in dimension $1$, known integrality results are rather scarce and often based on conjectures, so our main result deals with multiplicative dependence for semi-group dynamics assuming that a certain Zariski-non-density result holds for integral points in orbits.  To state our results precisely, let $\phi_1, \ldots, \phi_\ell$ be self-maps. We denote by $\mathcal F$ the semi-group generated by $\phi_1, \ldots, \phi_\ell$ under composition, and by $\mathcal O_{\mathcal F}(P)$ the semi-group orbit of a point $P$
\[
\{\phi(P): \phi\in \mathcal F\}.
\]
On $\pp^N$, we can define the multiplicative structure by defining $[a_0:\cdots : a_N]\cdot [b_0:\cdots : b_N] = [a_0 b_0 : \cdots :a_N b_N]$, and we denote $[a_0^r:\cdots : a_N^r]$ by $[a_0:\cdots : a_N]^r$ for any $r\in \mathbb Z$ if none of the coordinates is zero.  Via the standard inclusion $\mathbb G_m^N \hookrightarrow \pp^N$ defined by $(a_1,\ldots, a_N) \mapsto [1:a_1:\cdots : a_N]$, the multiplicative structures on $\pp^N$ is compatible with that of $\mathbb G_m^N$.  Our first result generalizes some instances of \cite{BOSS} to higher dimension for multiple maps.

\begin{theorem}\label{thm:fggp}
Let $k$ be a number field, and $\Gamma$ be a finitely generated subgroup of $\mathbb{G}_m^N(k) \subset \pp^N(k)$. For $i=1, \ldots, \ell$, let  $\phi_i$ be an endomorphism of $\mathbb P^N$ of degree $d_i \ge 2$ defined over a number field $k$, and let $\mathcal F$ be the semi-group generated by $\phi_1, \ldots, \phi_\ell$ under composition.   Now, for $\epsilon \in [0,1)$, let us consider the following hypothesis:
\begin{quote}
There exists a nontrivial subdivisor $D$ of $(X_0 X_1 \cdots X_N = 0)$  such that
for all sufficiently large finite subset $S$ of places containing all archimedean ones,
\begin{equation}\label{eq:thesetfggp}\tag{Hyp${}_\epsilon$}
\bigcup_{P \in \pp^N(k)} \big\{\phi(P): \phi\in \mathcal F\setminus \{\id\}, \sum_{v\notin S} \lambda_v(D, \phi(P)) < \epsilon h(D, \phi(P))\big\}
\end{equation}
is contained in some Zariski-closed $Z_\epsilon = Z_{\epsilon, D, S} \neq \pp^N$.
\end{quote}
Then for any $c < 1$, the above hypothesis for some $\epsilon \ge \frac{1 + c}2$ implies that there exist finitely many points $P_1,\ldots, P_m$ such that
\begin{multline}\label{eq:thesetofthm}
\left \{\phi(P): P \in \pp^N(k), \phi(P)^{r} =u \cdot \psi(P)^{s}, \phi \in \mathcal F\setminus \{\id\}, \psi \in \mathcal F, u\in \Gamma, \right.\\
\left. \phi(P), \psi(P) \notin |D|, r, s\in \mathbb Z\setminus \{0\}, \left|\frac sr \right| \deg \psi \le c \deg \phi\right\}
\end{multline}
is contained in
\[
Z_\epsilon \cup \bigcup_{i=1}^m \mathcal O_{\mathcal F}(P_i).
\]
\end{theorem}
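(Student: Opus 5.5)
The plan is to show that a multiplicative relation $\phi(P)^r = u\,\psi(P)^s$ forces $\phi(P)$ to have few non-$S$-adic intersections with $D$, compared with its height, once $h(P)$ is large. Such points then lie in the exceptional set of (Hyp${}_\epsilon$), hence in $Z_\epsilon$. The points $P$ of bounded height are finite in number by Northcott, and they will supply the $P_i$.

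\textbf{Step 1: choice of $S$.} Enlarge $S$ (allowed, since the hypothesis holds for all sufficiently large $S$) so that it contains the archimedean places, the places where a generator of $\Gamma$ has a non-unit coordinate, and the places of bad reduction of $\phi_1,\dots,\phi_\ell$.

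\textbf{Step 2: local comparison at $v \notin S$.} Write $\phi(P)=[a_0:\cdots:a_N]$ and $\psi(P)=[b_0:\cdots:b_N]$; all coordinates are nonzero because $\phi(P),\psi(P)\notin|D|$ and the relation is defined. Since $u$ is an $S$-unit point, for $v\notin S$ and all $i,j$ we have
\[
r\,(v(a_i)-v(a_j)) = s\,(v(b_i)-v(b_j)).
\]
The local height $\lambda_v(X_i=0,Q)$ of a coordinate hyperplane is $v(a_i)-\min_j v(a_j)$, up to normalization. Summing over the components of $D$ gives
\[
\sum_{v\notin S}\lambda_v(D,\phi(P)) \le \Big|\frac sr\Big| \sum_{v\notin S}\sum_{X_i\subseteq D}\big(v(b_i)-\min_j v(b_j)\big) \le \Big|\frac sr\Big|\big(h(D,\psi(P))+O(1)\big).
\]
The last inequality holds because the non-archimedean local heights of an effective divisor are nonnegative, so the sum over $v\notin S$ is bounded by the full height.

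\textbf{Step 3: global heights in the semigroup.} Use Weil's height machine: $h(D,Q)=\deg D\cdot h(Q)+O(1)$. Use also the uniform semigroup estimate (Call--Silverman/Kawaguchi telescoping) $|h(\phi_i(Q))-d_i h(Q)|\le C$. Iterating along a word, and using $d_i\ge 2$, gives for every $\phi\in\mathcal F$
\[
\deg\phi\,(h(P)-C') \le h(\phi(P)) \le \deg\phi\,(h(P)+C')
\]
with $C'$ independent of $\phi$. Combining this with Step 2 and $|s/r|\deg\psi\le c\deg\phi$ yields
\[
\sum_{v\notin S}\lambda_v(D,\phi(P)) \le c\,\deg D\,\deg\phi\,(h(P)+C'') .
\]
Meanwhile $\epsilon\, h(D,\phi(P))\ge \epsilon\,\deg D\,\deg\phi\,(h(P)-C'')$. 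Since $\epsilon\ge\frac{1+c}{2}>c$, the first quantity is strictly smaller than the second as soon as $h(P)>B:=C''\frac{\epsilon+c}{\epsilon-c}$. This bound is uniform in $\phi,\psi,r,s,u$. For such $P$, the point $\phi(P)$ with $\phi\ne\id$ belongs to the set in (Hyp${}_\epsilon$), so $\phi(P)\in Z_\epsilon$.

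\textbf{Step 4: bounded height.} The points $P\in\pp^N(k)$ with $h(P)\le B$ are finite by Northcott; call them $P_1,\dots,P_m$. For them $\phi(P)\in\mathcal O_{\mathcal F}(P_i)$ trivially, which gives the claimed containment.

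\textbf{Main obstacle.} The delicate point is the bookkeeping in Steps 2--3. One must make sure that the constant relating $\sum_{v\notin S}\lambda_v(D,\cdot)$ to $h(D,\cdot)$ after transport by the multiplicative relation is exactly $|s/r|$, with no extra factor such as $N$ or $2$. This is where the hypothesis $\epsilon\ge\frac{1+c}{2}$, rather than merely $\epsilon>c$, leaves room. One must also make sure that all $O(1)$ terms are uniform over the infinite semigroup $\mathcal F$. If the naive bound in Step 2 picks up a factor (for instance from the $\min_j$ term when $D$ is a proper subdivisor), I would first try to absorb it by symmetrizing with the archimedean contributions in $S$, which are $O(1)$ relative to heights. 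Failing that, I would use the slack $\epsilon-c\ge\frac{1-c}{2}$.
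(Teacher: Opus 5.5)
Your proposal follows the paper's proof, arranged contrapositively. It uses the same uniform bound $|h(\phi(Q))/\deg\phi-h(Q)|\le C_1$ over $\mathcal F$, the hypothesis as a lower bound for $\sum_{v\notin S}\lambda_v(D,\phi(P))$ off $Z_\epsilon$, transfer of non-$S$ local heights through the relation, and Northcott for $P$. Steps 1, 3 and 4 are fine. There is, however, a genuine gap in Step 2 when $r$ and $s$ have opposite signs, which the statement allows. From $r(v(a_i)-v(a_j))=s(v(b_i)-v(b_j))$ you get
\[
v(a_i)-\min_j v(a_j)=\max_j \tfrac{s}{r}\big(v(b_i)-v(b_j)\big).
\]
This equals $\frac sr\big(v(b_i)-\min_j v(b_j)\big)$ only when $s/r>0$. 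When $s/r<0$ it equals $\left|\frac sr\right|\big(\max_j v(b_j)-v(b_i)\big)$, which $\lambda_v(D,\psi(P))$ does not control. Concretely, work over $\qq$ with $S=\{\infty\}$, $N=2$, $D=(X_0=0)$, $r=1$, $s=-1$, $u=1$, and $\psi(P)=[1:x:y]$ with $x,y$ coprime nonzero integers. Then $\phi(P)=\psi(P)^{-1}=[xy:y:x]$. Hence $\sum_{v\notin S}\lambda_v(D,\psi(P))=0$, while $\sum_{v\notin S}\lambda_v(D,\phi(P))=\log|xy|$, which is about $2h(D,\psi(P))$ when $|x|\approx|y|$. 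So both your intermediate inequality and the bound $\le\left|\frac sr\right|(h(D,\psi(P))+O(1))$ fail. The paper's proof makes the identical step, justified only by $u\in\Gamma$, so you have reproduced its argument including this gap. As written, the argument covers only $rs>0$. In that case Step 2 is an equality at every $v\notin S$, so your worry about a stray factor from the $\min_j$ term for a proper subdivisor does not arise.

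Your proposed remedies do not repair the case $rs<0$. The discrepancy occurs at places outside $S$, so symmetrizing with archimedean terms is irrelevant. It is also a multiplicative loss: the slack between $\epsilon$ and $c$ absorbs a factor $N$ only if $Nc<\epsilon$, which is impossible for $c\ge 1/N$ since $\epsilon<1$. For $rs<0$, the computation does give
\[
\sum_{v\notin S}\lambda_v(D,\phi(P))\le\left|\tfrac sr\right|\deg D\cdot h(\psi(P)^{-1}),
\]
because $\sum_{v}\log\big(|b_i|_v/\min_j|b_j|_v\big)=h(\psi(P)^{-1})$ by the product formula, and every term is nonnegative. Moreover $h(\psi(P)^{-1})\le N\,h(\psi(P))$, since coordinatewise inversion is the degree-$N$ standard Cremona map; the example above is nearly sharp. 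So your argument closes for $rs<0$ only under the stronger condition $N\left|\frac sr\right|\deg\psi\le c\deg\phi$. For $N=1$, inversion preserves height, so the original condition suffices there.
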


From the proof of Theorem \ref{thm:fggp}, it will be clear that we can generalize it to the setting of projective varieties, if we have endomorphisms $\phi_1, \ldots, \phi_\ell$ which are polarized by the same divisor; see Remark \ref{rem:polarized} for more details.  We will also mention in Remark \ref{rem:vectorrelation} how Theorem \ref{thm:fggp} can also be generalized to ``vector-type'' relations. In addition, we will comment in Remark \ref{rem:fixedrs} on what more one can say when we fix $r$ and $s$.

It will be interesting to know if there are examples where \eqref{eq:thesetofthm} is not contained in a Zariski-non-dense set but only contained in its union with finitely many orbits.  In other words, Theorem \ref{thm:fggp} does not rule out the possibility that there is a $P$ whose $\mathcal F$-orbit is Zariski-dense and there exist infinitely many $\phi\in \mathcal F$ for which $\phi(P)$ belongs to \eqref{eq:thesetofthm}. In the case of one-variable polynomials with $\Gamma$ being trivial, Young \cite{young} has shown that such a relation for infinitely many orbit points forces the maps to be special, but it is not clear if the same holds in higher-dimensions and/or when $\Gamma$ has some generators.

We now consider the case when the group relation occurs between an iterate and its post-composition with other maps.

\begin{theorem}\label{thm:fggp2}
Under the same setup  as Theorem \ref{thm:fggp}, for any $c < 1$, \eqref{eq:thesetfggp} of Theorem \ref{thm:fggp} for some $\epsilon \ge \frac{1 + c}2$ implies that for any $s, r\in \ZZ\setminus \{0\}$
\begin{multline}\label{eq:thesetofthm2}
\left \{(\phi(\psi(P)), \psi(P)): P\in \pp^N(k), \phi\in \mathcal F\setminus \{\id\}, \psi\in \mathcal F, \phi(\psi(P)) \notin Z_\epsilon,  \right. \\
\left. u\in \Gamma, \phi(\psi(P)), \psi(P) \notin |D|, \Big(\phi(\psi(P))\Big)^{r} = u \cdot \psi(P)^{s},  \left|\frac sr \right| \le c \deg \phi\right\}
\end{multline}
is finite.
\end{theorem}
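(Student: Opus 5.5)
Set $Q = \psi(P)$ and $R = \phi(Q)$. The multiplicative relation $R^r = u\cdot Q^s$ will be converted into an inequality between heights and counting functions for $D$. We then play this inequality against \eqref{eq:thesetfggp} and Northcott's theorem.

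\emph{Step 1 (outside $S$).} Enlarge $S$ so that all generators of $\Gamma$ are $S$-units, and so that the maps $\phi_i$ have good reduction outside $S$. Since $\Gamma$ is finitely generated, one fixed $S$ suffices, and we may take $S$ large enough for \eqref{eq:thesetfggp} to apply. For $v\notin S$, the local height $\lambda_v(D,\cdot)$ at a point off $|D|$ measures how much the coordinates $X_i$ appearing in $D$ are $v$-adically small relative to the others. Coordinatewise, $R^r = uQ^s$ with $u$ an $S$-unit gives, for each $v\notin S$,
\[
|r|\,\lambda_v(D,R) \le |s|\,\lambda_v(D,Q) + O(1),
\]
with the $O(1)$ vanishing for $v\notin S$. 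The sign cases of $r,s$ are handled by writing $D$ in terms of ratios of coordinates. Summing over $v$ gives
\[
\sum_{v\notin S}\lambda_v(D,R) \le \Big|\frac sr\Big| \sum_{v\notin S}\lambda_v(D,Q) \le \Big|\frac sr\Big|\, h(D,Q)+O(1).
\]

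\emph{Step 2 (comparison of heights).} Because $\phi$ is a morphism of degree $\deg\phi$, functoriality gives $h(R) = \deg\phi\, h(Q) + O(1)$. The constant here is not uniform in $\phi$, so I would use the telescoping bound for compositions of the generators. This bound is $O(1)$ uniformly once $h(Q)$ is large, because the errors form a geometric series with ratio $1/\min d_i$. Since $h(D,\cdot)\le h(\cdot)$ and $h(D,R) \asymp h(R)$ up to the degree of $D$, we get
\[
\sum_{v\notin S}\lambda_v(D,R) \le c\, h(D,R) + C,
\]
with $C$ independent of $\phi$ and $\psi$. \emph{This is the main obstacle:} making the error terms uniform over the whole semigroup, and normalizing $h(D,\cdot)$ against $h$ so that the constant $c$ survives.

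\emph{Step 3 (conclusion).} Since $R\notin Z_\epsilon$ and $\epsilon \ge (1+c)/2 > c$, hypothesis \eqref{eq:thesetfggp} forces $\sum_{v\notin S}\lambda_v(D,R) \ge \epsilon\, h(D,R)$. Combining this with Step 2 yields $(\epsilon - c)\,h(D,R) \le C$, so $h(R)$ is bounded. By Northcott's theorem there are finitely many possible $R$. For each such $R$, we have $h(Q) \le (h(R)+C')/\deg\phi \le h(R) + C'$, so $h(Q)$ is also bounded and there are finitely many $Q$. Hence the set of pairs $(R,Q)$ in \eqref{eq:thesetofthm2} is finite. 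Here the hypothesis $\epsilon\ge (1+c)/2$ is used only through $\epsilon>c$, with slack left for the constants.
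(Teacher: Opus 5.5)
Your Step~2 has a genuine gap, and it is exactly the point you flag as the main obstacle. The telescoping argument over the generators only controls the \emph{normalized} discrepancy $\bigl|h(\phi(Q))/\deg\phi - h(Q)\bigr|\le C_1$, as in \eqref{eq:usualhtineq}. The geometric series with ratio $1/\min d_i$ makes $C_1$ independent of $\phi$ only \emph{after} dividing by $\deg\phi$. Without that normalization, the error in $h(\phi(Q)) = \deg\phi\, h(Q) + \mathrm{err}$ can be as large as $C_1\deg\phi$, and taking $h(Q)$ large does not help. For example, for $f = [2X^2:Y^2]$ and $Q=[x:1]$ with $x\in\ZZ\setminus\{0\}$, one has $h(f^{\circ n}(Q)) - 2^n h(Q) = (2^n-1)\log 2$ for every such $x$.

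With the correct error, your chain only yields $(\epsilon - c)\,h(D,R)\le C + c(\deg D)C_1\deg\phi$, i.e.\ $h(R)=O(\deg\phi)$. That amounts to a bound on $h(Q)$, not on $h(R)$. Since $\deg\phi$ is unbounded on $\mathcal F$, Northcott cannot be applied to $R$, and Step~3 fails. Separately, $h(D,\cdot)\le h(\cdot)$ is not correct; the right comparison is $h(D,\cdot) = (\deg D)h(\cdot)+O(1)$.

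The paper gets around this with Kawaguchi's canonical height $\widehat h_\gamma$ for the infinite word $\gamma=\phi\circ\phi\circ\cdots$. It satisfies $\widehat h_\gamma(\phi(Q)) = \deg\phi\,\widehat h_\gamma(Q)$ \emph{exactly}, and $|\widehat h_\gamma - h|\le C_5$ with $C_5$ independent of $\gamma$. Running your chain with $\widehat h_\gamma$ gives $(\deg D)\bigl(\epsilon\deg\phi - |s/r|\bigr)\widehat h_\gamma(Q)\le C$, where $C$ depends on the fixed $r,s$. The coefficient is at least $(\deg D)(\epsilon-c)\deg\phi$, so $\widehat h_\gamma(R)=\deg\phi\,\widehat h_\gamma(Q)$ is bounded, and hence so are $h(R)$ and $h(Q)$.

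So your Step~2 inequality is true, but its constant must depend on $|s/r|$; the fact that $r,s$ are fixed is essential and should appear explicitly. Alternatively, you can keep naive heights if you reverse the order of deductions. The $C_1\deg\phi$ error is harmless against the main term $(\epsilon-c)(\deg D)\deg\phi\,h(Q)$, so first get a uniform bound on $h(Q)$; this is the proof of Theorem~\ref{thm:fggp} with base point $Q$ and $\psi=\id$. Only then use $\epsilon\,h(D,R)\le|s/r|\bigl(h(D,Q)+C_3\bigr)$ with $r,s$ fixed to bound $h(R)$.
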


Note that for the case of an orbit under a single map $f$, $\iter f n$ is equal to $\iter f m$ post-composed with $\iter f {(n-m)}$ when $n> m$.  As a result, we have the following corollary.
\begin{cor}
Under the same setup  as Theorem \ref{thm:fggp} but suppose that $\mathcal F$ is generated by just one map $f$ of degree $d$. Then for any $c < 1$, \eqref{eq:thesetfggp} of Theorem \ref{thm:fggp} for some $\epsilon \ge \frac{1 + c}2$ implies that for any $s, r\in \ZZ\setminus \{0\}$
\begin{multline*}
\left \{(\iter{f}{n} (P), \iter{f}{m} (P)): P\in \pp^N(k), n > m, \iter{f}{n} (P) \notin Z_\epsilon,  \right. \\
\left. u\in \Gamma, \iter{f}{n} (P), \iter{f}{m} (P) \notin |D|, \Big(\iter{f}{n} (P)\Big)^{r} = u \cdot \Big(\iter{f}{m} (P)\Big)^{s},  \left|\frac sr \right| \le c d^{n-m}\right\}
\end{multline*}
is finite.
\end{cor}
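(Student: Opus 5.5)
The corollary is a direct specialization of Theorem \ref{thm:fggp2}, so the plan is to match the data and then quote that theorem. I take $\mathcal F$ to be the semi-group generated by the single map $f$; its elements are the iterates $\iter f k$ with $k\ge 0$ and $\iter f 0=\id$. Given a pair $(\iter f n(P), \iter f m(P))$ in the set of the corollary with $n>m$, I set
\[
\psi := \iter f m \in \mathcal F, \qquad \phi := \iter f {(n-m)} \in \mathcal F\setminus\{\id\}.
\]
The second membership holds because $n-m\ge 1$ and $\deg f = d\ge 2$, so $\iter f{(n-m)}$ has degree $d^{n-m}>1$ and is not the identity. With these choices $\phi(\psi(P)) = \iter f n(P)$ and $\psi(P)=\iter f m(P)$, and $\deg\phi = d^{n-m}$ because degrees of endomorphisms of $\pp^N$ multiply under composition.

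Next I check each condition defining \eqref{eq:thesetofthm2} for this choice.
\begin{itemize}
\item The condition $\iter f n(P)\notin Z_\epsilon$ becomes $\phi(\psi(P))\notin Z_\epsilon$.
\item The conditions $\iter fn(P), \iter fm(P)\notin |D|$ become $\phi(\psi(P)),\psi(P)\notin|D|$.
\item The relation $(\iter fn(P))^r = u\cdot(\iter fm(P))^s$ with $u\in\Gamma$ becomes $(\phi(\psi(P)))^r = u\cdot \psi(P)^s$.
\item The bound $|s/r|\le c\,d^{n-m}$ becomes $|s/r|\le c\deg\phi$.
\end{itemize}
So every element of the set in the corollary is literally an element of the set \eqref{eq:thesetofthm2} in Theorem \ref{thm:fggp2}, for the same $\epsilon$, $c$, $D$, $S$, $r$, $s$ and $Z_\epsilon$.

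The hypothesis \eqref{eq:thesetfggp} assumed in the corollary is the same statement as the one Theorem \ref{thm:fggp2} requires, since $\mathcal F$ and $\mathcal F\setminus\{\id\}$ are unchanged. Theorem \ref{thm:fggp2} therefore says the larger set is finite, and its subset, the set in the corollary, is finite as well.

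There is essentially no obstacle here. The only points needing care are that $\phi\neq\id$, which uses $n>m$ and $d\ge 2$, and the multiplicativity of degree under composition. Everything else is a relabeling.
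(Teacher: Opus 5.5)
Your proposal is correct and follows the paper's own argument. The paper deduces the corollary from Theorem \ref{thm:fggp2} by writing $\iter f n = \iter f{(n-m)} \circ \iter f m$, with $\phi = \iter f{(n-m)}$ (of degree $d^{n-m}$) and $\psi = \iter f m$, which is exactly your relabeling.
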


This corollary can be thought of as a higher-dimensional generalization of \cite{BOSS}, as explained in more detail in Remark \ref{rem:boss}.  The proof of Theorem \ref{thm:fggp} uses the usual height, while the proof of Theorem \ref{thm:fggp2} takes advantage of canonical heights to obtain more precise height bounds, as explained in further details in Remark \ref{rem:canonht}.

We now comment on the hypothesis \eqref{eq:thesetfggp}.  Since a set of $(D,S)$-integral points satisfy $\sum_{v\notin S} \lambda_v(D, P) \le C$ for some constant $C$, this hypothesis in particular shows Zariski-non-density of integral points in semi-group orbit.    Moreover, if the orbit is generic (i.e. any infinite subset is Zariski-dense), it follows that the set of integral points in the semi-group orbit is finite.  While \eqref{eq:thesetfggp} is known to be true for dimension $1$ by \cite{S1, HS}, it is still open in dimension at least $2$. The first author has obtained results \cite{M3} in this direction  using Diophantine approximation results for hypersurfaces by Evertse--Ferretti \cite{EF}, which in turn is based on Schmidt subspace theorem.  For a single map in higher-dimension, the second author \cite{Y} has obtained results in the direction of hypothesis \eqref{eq:thesetfggp} assuming a deep Diophantine conjecture by Vojta \cite{V2}, which can be thought of as a vast generalization of the Schmidt subspace theorem and a strengthening of results of Evertse--Ferretti.  Following these ideas, we show in Theorem \ref{thm:basedonvojta} situations where \eqref{eq:thesetfggp} holds for sufficiently small $\epsilon$,  assuming Vojta's Main Conjecture.  Since Theorems \ref{thm:fggp} and \ref{thm:fggp2} require knowing \eqref{eq:thesetfggp} for sufficiently large $\epsilon$, this is nowhere near satisfactory even under assuming Vojta's conjecture; we present it here merely to demonstrate that the hypothesis \eqref{eq:thesetfggp} at least seems reasonable. We provide a specific example of all of our results in Example \ref{example}.

\section{Preliminaries on heights}\label{sec:heights}

In this section, we set some notations and conventions around height functions.  For more details, see for example \cite{bomgub} and \cite{hinsil}.  Let $k$ be a number field, and let $M_k$ be the set of places.  For the unique archimedean place of $M_\qq$, we use the usual absolute value on $\rr$ as the normalized one.  For the non-archimedean place of $M_\qq$ corresponding to the prime $p$, we normalize the absolute value by defining the absolute value of $p$ to be $\frac 1p$.  We then normalize the absolute value corresponding to each place $v\in M_k$ by defining $|x|_v$ to be the $\frac{[k_v:\qq_v]}{[k:\qq]}$-th power of the absolute value in $v$ whose restriction to $\qq$ is a normalized absolute value on $\qq$.  We then define a Weil (global) height on the projective space $\pp^N$ by
\[
h([a_0:\cdots : a_N] ) = \sum_{v\in M_k} \log \max_i |a_i|_v
\]
for $[a_0:\cdots : a_N]\in \pp^N(k)$.  This becomes a well-defined function on $\pp^N(\overline{\qq})$.  For a nontrivial effective divisor $D$ on $\pp^N$ defined over $k$, we choose a homogeneous polynomial $F$ of degree $d$ with coefficients in the ring $\mathcal O_k$ of integers of $k$ so that $D$ is defined by $F$, and we define a local height function for each $v\in M_k$ by
\[
\lambda_v(D, [a_0:\cdots : a_N]) = \log \frac{(\max |a_i|_v)^d}{|F(a_0, \ldots, a_N)|_v}.
\]
This is a well-defined function on $(\pp^N \setminus |D|)(k)$.  For a general divisor $D$ on a projective variety $X$, we first write $D$ as the difference of two very ample Cartier divisors $D_1$ and $D_2$, where $D_i = \varphi_i^*(H_i)$ for some closed immersion $\varphi_i: X \longrightarrow \pp^{N_i}$ and $H_i$ is a hyperplane in $\pp^{N_i}$.  We then define Weil height to be
\[
h(D, P) = h(\varphi_1(P)) - h(\varphi_2(P)).
\]
Moreover, letting $s_{i,1},\ldots, s_{i, \ell_i}$ be a basis of global sections of the line bundle $\mathscr L(D_i)$ and choosing one rational section $s$ of $\mathscr L(D)$, we can define local height to be
\[
\lambda_v(D,P) = \max_m \min_\ell \log \left| \big(s_{1,m} \otimes (s_{2,\ell} \otimes s)^{-1}\big)(P) \right|_v,
 \]
where we evaluate at $P$ via the isomorphism $\mathscr L(D_1)\otimes (\mathscr L(D_2) \otimes \mathscr L(D))^{-1} \cong \mathscr O_X$.  This notion is well-defined in the sense that if we choose different parameters for the ample divisors or the global/rational sections, the two local height functions agree on all but finitely many places and even at those finitely many places, their difference is a bounded function.

The local and Weil height are functorial with respect to pullbacks, namely, whenever $\varphi: Y\longrightarrow X$ is a morphism of algebraic varieties, there exists a constant $C$ such that
\[
|h(\varphi^* D, P) -  h(D, \varphi(P))|\le C
\]
for $P\in Y(\overline \qq)$, and there exists a sequence of constants $\{C_v\}_{v\in M_k}$ such that $C_v = 0$ for all but finitely many $v$ and
\[
|\lambda_v(\varphi^*D, P) -  \lambda_v(D, \varphi(P))| \le C_v
\]
for all $P\in Y(k)\setminus \varphi^{-1}|D|$ and $v\in M_k$.  Moreover, there exists a constant $C$ such that
\begin{equation}\label{eq:htdecompose}
\left| h(D,P) -  \sum_{v\in M_k} \lambda_v(D,P) \right| \le C
\end{equation}
holds for $P\in (X\setminus |D|)(k)$. For an effective divisor $D$, there exists a sequence $\{C_v\}_{v\in M_k}$ such that $C_v = 0$ for all but finitely many $v$ and
\[
\lambda_v(D, P) \ge C_v
\]
holds for $P$ outside $|D|$ for each $v$; in particular, for a finite subset $S\subset M_k$, there exists a constant $C$ such that
\begin{equation}\label{eq:outsideSlocal}
\sum_{v\notin S} \lambda_v(D, P) \le h(D, P) + C
\end{equation}
for all $P$ outside $|D|$.

\section{Proofs of the main theorems}

\begin{proof}[Proof of Theorem \ref{thm:fggp}]
Since $\phi_i$'s are morphisms of degree $d_i$  on $\mathbb P^N$, there exists a constant $C$ such that
\[
\left|\frac{h(\phi_i(Q))}{d_i} - h(Q)\right| \le C
\]
holds for all $Q\in \mathbb P^N(\overline \qq)$ and for all $i=1,\ldots, \ell$. By an inductive argument using
\begin{align*}
&\left|\frac{h(\phi\circ\psi (Q))}{\deg(\phi) \cdot \deg(\psi)} - h(Q)\right|\\
&\le \frac 1{\deg(\psi)} \left|\frac{h(\phi\circ\psi (Q))}{\deg(\phi)} - h(\psi(Q))\right| + \left|\frac{h(\psi(Q))}{\deg(\psi)} - h(Q)\right|\\
\end{align*}
we must have
\begin{equation}\label{eq:usualhtineq}
\left| \frac{h(\phi(Q))}{\deg (\phi)} - h(Q) \right| \le \frac C{1- \frac 1{\min \deg(\phi_i)}} =: C_1
\end{equation}
for all $\phi\in \mathcal F$. Moreover, there exists a constant $C_2$ such that
\[
|h(D, Q) - (\deg D) h(Q)| \le C_2
\]
for all $Q\in \pp^N(\overline \qq)$.
Now, let $S$ be the set of places large enough so that hypothesis \eqref{eq:thesetfggp} is satisfied and the coordinates of all the generators are $S$-units.
This makes any coordinate of any element of $\Gamma$  into an $S$-unit (if nonzero).   Suppose that $\phi(P)$ is in the set \eqref{eq:thesetofthm}, $\phi(P) \notin Z$ for
a certain $\epsilon$ to be chosen later, and $\phi(P)$ lies outside $|D|$.  Then because of hypothesis \eqref{eq:thesetfggp} and the fact that any element in $\Gamma$ does not have any valuation outside of $S$,
\begin{align}
\epsilon & \Big( (\deg D) \big(\deg(\phi) h(P) - \deg(\phi) C_1\big) - C_2\Big)\notag \\
&\le \epsilon \Big( (\deg D) h(\phi(P)) - C_2\Big)\notag \\
&\le \epsilon h(D, \phi(P))\notag \\
&\le\sum_{v\notin S} \lambda_v(D, \phi(P)) \nonumber\\
&\leq \left|\frac sr\right| \sum_{v\notin S} \lambda_v(D, \psi(P)) \qquad (\because u\in \Gamma)\label{eq:fggp_rs}\\
& \leq \left|\frac sr\right| \left(h(D, \psi(P)) + C_3\right)  \qquad (\because \eqref{eq:outsideSlocal})\nonumber \\
&\leq \left|\frac sr\right| \left((\deg D) h(\psi(P)) + C_4\right)\nonumber\\
&\leq \left|\frac sr\right| \left( (\deg D) \Big(\deg (\psi) h(P) +  \deg (\psi) C_1\Big) + C_4\right).\nonumber
\end{align}
Therefore, we have
\begin{multline}
(\deg D) \Big(\epsilon \deg(\phi) - \left|\frac sr\right| \deg (\psi)\Big) h(P) \\
\le \epsilon \Big( (\deg D) \deg(\phi) C_1 + C_2\Big) + \left|\frac sr\right| \left((\deg D) \cdot \deg (\psi) C_1 + C_4\right). \label{eq:finalheight}
\end{multline}
By choosing $\epsilon$ to be at least $\frac {c+1}2$, it follows from \eqref{eq:thesetofthm} that the coefficient of $h(P)$ on the left-hand side of the \eqref{eq:finalheight} is at least $\deg D \cdot \frac {1-c}2 \cdot \deg (\phi)$, while the right-hand side can be bounded above by
\[
(\epsilon (\deg D) C_1 + c (\deg D) C_1 + c C_4) \deg(\phi) + \epsilon C_2.
\]
Thus, dividing both sides by $\deg(\phi)$, it follows that $h(P)$ must be bounded above by a constant, independent of $\phi$ or $\psi$.  Thus $P$ must come from a finite list by the Northcott property, and such a $\phi(P)$ must lie in finitely many orbits.
\end{proof}

\begin{remark}\label{rem:polarized}
This theorem extends to ``simultaneously polarizable endomorphisms'' on a projective variety $X$.  Namely, we fix an embedding $\iota: X\hookrightarrow \pp^N$, and suppose that there is an ample divisor $A$ such that $\phi_i^*(A)$ is linearly equivalent to $d_i A$ for some $d_i \ge 2$.  Then the same statement as the theorem holds if the hypothesis is satisfied after viewing the points in $\pp^N$ via $\iota$ and assuming the hypothesis via $\iota$.  The proof is exactly the same as above, and for this reason we have avoided using for example $h(A, Q) = (\deg A) h(Q)$, which is true without any adjustment by constants on projective space.
\end{remark}

\begin{remark}\label{rem:vectorrelation}
From the proof, it is evident that one can generalize Theorem \ref{thm:fggp} to more general multiplicative relations.  Namely,  letting $\vec r\in (\mathbb Z\setminus \{0\})^N$, we define $(a_1, \ldots, a_N)^{\vec r} = (a_1^{r_1}, \ldots, a_N^{r_N})$ if none of the coordinates is zero.  This does not in general extend to a multiplication on $\pp^N$, but whenever the orbit does not intersect the divisor $(X_0 \cdots X_N =0)$, this multiplication is well-defined via the inclusion $\mathbb G_m^N \hookrightarrow \pp^N$ we used above.  Since the only part of the argument affected by this change is \eqref{eq:fggp_rs}, we would obtain the same conclusion for the set
\begin{multline*}
\left \{\phi(P)\in \pp^n(k): \phi(P), \psi(P) \notin (X_0 \cdots X_N =0), \phi(P)^{\vec r} =u \psi(P)^{\vec s}, \right.\\
\left. u\in \Gamma, \vec r, \vec s\in (\mathbb Z\setminus \{0\})^{N}, c \frac{\deg \phi}{\deg \psi} \geq \max_{i: (X_i=0) \subset D} \frac {s_i}{r_i} \right\}.
\end{multline*}
\end{remark}

\begin{remark}\label{rem:fixedrs}
Rearranging \eqref{eq:finalheight}, we obtain
\[
(\deg D) \Big(\epsilon \deg(\phi) - \left|\frac sr\right| \deg (\psi)\Big) (h(P)- C_1) \le \epsilon C_2 +  \left|\frac sr\right| C_4.
\]
Our choice of $\epsilon$ as in the proof thus shows
\[
\deg D \cdot \frac {1-c}2 \cdot \deg (\phi) (h(P)- C_1) \le \epsilon C_2 +  \left|\frac sr\right| C_4.
\]
Therefore, either $h(P) \le C_1 + 1$ and thus $P$ comes from a finite set by the Northcott property, or we have
\[
\deg D \cdot \frac {1-c}2 \cdot \deg (\phi) < \epsilon C_2 +  \left|\frac sr\right| C_4.
\]
In the latter case, if we assume further that $r$ and $s$ are fixed (or at least if $|\frac sr |$ is bounded), it follows that $\deg(\phi)$ is bounded and hence there are only finitely many possibilities for $\phi$.  Thus, we can conclude that $\phi(P)$ as in \eqref{eq:thesetofthm} either lies in $Z_\epsilon$ or $P$ comes from a finite set.

As for the conclusion $\phi(P) \in Z_\epsilon$, it seems difficult to change this to some kind of a finiteness statement, even under further assumptions.  For a single $P$, $\phi(P) \in Z_\epsilon$ only occurs for finitely many $\phi$'s if we assume that the semigroup orbit of $P$ is generic, that is, any infinite subset is Zariski-dense. However, when we allow $P$ to vary, one could in theory have infinitely many $\phi(P)$'s lying in $Z_\epsilon$, coming from infinitely many $P$'s with each having generic semigroup orbit.
\end{remark}

We now prove Theorem \ref{thm:fggp2}, which deals with multiplicative dependence occurring between an iterate and its further iterate.

\begin{proof}[Proof of Theorem \ref{thm:fggp2}]
Let $(\phi\circ \psi(P), \psi(P))$ be in the set \eqref{eq:thesetofthm2}.  If $\gamma =(\gamma_m)_{m\ge 1}$ is an infinite word consisting of $\phi_1, \ldots, \phi_\ell$, Kawaguchi \cite[Theorem 3.3]{Krand} has defined the canonical height $\widehat h_\gamma$ with respect to $\gamma$ by
\[
\widehat h_\gamma(P) = \lim_{n\to \infty} \frac{h(\gamma_n\circ \cdots \circ \gamma_1(P))}{\deg (\gamma_n\circ \cdots \circ \gamma_1)},
\]
and shown that
\[
\widehat h_{(\gamma_m)_{m\ge 2}}(\gamma_1(P)) = \deg (\gamma_1) \widehat h_\gamma(P)
\]
for all $P$ and that there exists a constant $C_5$ (depending on $\phi_1, \ldots, \phi_\ell$, but independent of $\gamma$) such that
\[
|\widehat h_\gamma(P) - h(P) | \le C_5
\]
for all $P$.
Let $\gamma$ be the infinite word corresponding to $\phi \circ \phi \circ \cdots$.  Then letting $Q = \psi(P)$, whenever $\phi\circ \psi(P) \notin Z_\epsilon \cup |D|$, we must have
\begin{align}
\epsilon & \Big( (\deg D) \big(\deg(\phi) \widehat h_\gamma(Q) - C_5\big)  - C_2 \Big) \notag\\
&= \epsilon \Big( (\deg D) \big( \widehat h_\gamma(\phi(Q)) - C_5\big)  - C_2 \Big) \label{eq:needcanonht}\\
&\le \epsilon \Big( (\deg D) h(\phi(Q)) - C_2 \Big) \notag\\
&\le \epsilon h(D, \phi(Q)) \notag\\
&\le \sum_{v\notin S} \lambda_v(D, \phi(Q))\notag\\
&\le \left| \frac sr \right| \sum_{v\notin S} \lambda_v(D, Q)\notag\\
&\le \left| \frac sr \right| \left(h(D, Q) + C_3\right)\notag\\
&\le \left| \frac sr \right| \Big((\deg D) h(Q) + C_4 \Big)\notag\\
&\le \left| \frac sr \right| \Big((\deg D) \big(\widehat h_\gamma(Q) + C_5\big) + C_4 \Big), \notag
\end{align}
where we used $C_2, C_3, C_4$ as in the proof of Theorem \ref{thm:fggp}.
Rearranging terms, we must have
\begin{multline}
(\deg D) \Big(\epsilon \deg(\phi) - \left| \frac sr \right|\Big) \widehat h_\gamma(Q)\\
\le \epsilon ((\deg D) C_5 + C_2) + \left| \frac sr \right| ((\deg D) C_5 + C_4).  \label{eq:canonhtestimate}
\end{multline}
Choosing $\epsilon \ge \frac {1+c}2$, the coefficient of $\widehat h_\gamma(Q)$ in \eqref{eq:canonhtestimate} is bounded below by
\[
(\deg D) \frac {1-c}2 \deg(\phi),
\]
while the right-hand side of \eqref{eq:canonhtestimate} is a constant (note that $r$ and $s$ are fixed for this theorem).  Therefore, we conclude that $\deg(\phi) \widehat h_\gamma(Q)$ is bounded above by a constant.  Since
\[
\deg(\phi)\widehat h_\gamma(Q) = \widehat h_\gamma(\phi(Q)) \ge h(\phi(Q)) - C_5 = h(\phi(\psi(P))) - C_5,
\]
it follows that $h(\phi(\psi(P)))$ is also bounded above by a constant, and by the Northcott property, there are only finitely many such $\phi(\psi(P))$'s.  This argument also shows that $\widehat h_\gamma(Q)$ is also bounded, so we obtain the finiteness of $Q = \psi(P)$ as well.
\end{proof}

\begin{remark}
The same proof works even for an infinitely-generated $\mathcal F$, as long as the generators form a ``bounded'' set of maps, as defined in Kawaguchi \cite{Krand}.
\end{remark}

\begin{remark}\label{rem:boss}
This can be thought of as a natural generalization of B\' erczes--Ostafe--Shparlinski--Silverman \cite{BOSS}: for a single map $f$, $\iter f {(n+k)} = \iter fn \circ \iter fk$, so we can always write in the form $\phi \circ \psi$, and \eqref{eq:thesetfggp} of Theorem \ref{thm:fggp} was proved affirmatively by Silverman \cite{S1} for rational functions $f$ on $\pp^1$ whose preimages of $0$ contain more than $2$ points.  The condition in \cite{BOSS} is
\[
n\ge \frac{\log |s/r|}{\log d} + 1,
\]
and for $\phi = \iter fn$, this is equivalent to $\frac 1d \deg (\phi) \ge |s/r|$.
\end{remark}

\begin{remark}\label{rem:canonht}
We use the canonical height, instead of the naive height, in the proof of Theorem \ref{thm:fggp2} because of step \eqref{eq:needcanonht}.  If we use the naive height, we only have \eqref{eq:usualhtineq}, so it will be necessary to adjust by a constant multiple of $\deg \phi$ in \eqref{eq:needcanonht} and the argument does not work.  Similarly, since the value of the canonical height $\widehat h_\gamma$ depends on $\gamma$, the argument of the proof of Theorem \ref{thm:fggp2} does not immediately generalize to the general orbit dependency of $\phi(P)^r = u\cdot \psi(P)^s$.
\end{remark}

\section{Regarding Hypothesis \eqref{eq:thesetfggp} and Vojta's conjecture}\label{sec:vojta}
We will first recall the Main Conjecture of Vojta \cite[Conjecture 3.4.3]{V2}.

\begin{conjecture}\label{conj:vojta}
Let $X$ be a smooth projective variety defined over a number field $k$, $K$ be a canonical divisor, $A$ be an ample divisor, and $D$ be a reduced normal-crossings divisor.  Let $S$ be a finite subset of $M_k$.  Then given $\epsilon >0$, there exists a Zariski-closed $Z = Z_\epsilon \neq X$ such that
\begin{equation}\label{eq:vojtacon}
\sum_{v\in S} \lambda_v(D, P) + h(K,P) \le \epsilon h(A, P)
\end{equation}
for all $P\in (X\setminus Z)(k)$.
\end{conjecture}

One of the known cases of the conjecture above is when $X$ is the projective space and $D$ is a union of hyperplanes, known as Schmidt subspace theorem.
We now use Conjecture \ref{conj:vojta} to obtain paucity of integral points in orbits, confirming hypothesis \eqref{eq:thesetfggp} for sufficiently small $\epsilon$:

\begin{theorem}\label{thm:basedonvojta}
Let $X$ be a smooth projective variety defined over a number field $k$, $K$ be a canonical divisor of $X$, $A$ an ample divisor of $X$, $S$ be a finite set of places of $k$, and $\phi_1, \ldots, \phi_\ell$ be endomorphisms of $X$ defined over $k$. Let $\mathcal F$ be the semi-group generated by $\phi_1, \ldots, \phi_\ell$ under composition.  Suppose that there exist $\psi_1, \ldots, \psi_m\in \mathcal F$ and a nontrivial effective divisor $D$ defined over $k$ such that
\begin{itemize}
\item[\emph{(i)}] $\psi_i^*(D) = D_i^{(\text{nc})} + D_i^{(\text{rest})}$ for an effective divisor $D_i^{(\text{rest})}$
\item[\emph{(ii)}] $D_i^{(\text{nc})}$ is a reduced normal-crossings divisor
\item[\emph{(iii)}] $D_i^{(\text{nc})} + K$  is big
\item[\emph{(iv)}] $\displaystyle \mathcal F\setminus \{\id\}  = \bigcup_{i=1}^m \psi_i \circ \mathcal F$.
\end{itemize}
Then assuming Vojta's conjecture for $(X, D_i^{(\text{nc})})$, for all sufficiently small $\epsilon$, the set
\begin{equation}\label{eq:theset_vojta}
\big\{\phi(P): P\in X(k), \phi\in \mathcal F\setminus \{\id\}, \sum_{v\notin S} \lambda_v(D, \phi(P)) \le \epsilon h(A, \phi(P))\big\}
\end{equation}
is Zariski-non-dense in $X$.  If (iv) is replaced by
\[
\displaystyle \mathcal F \setminus \left(\{\id\} \cup \bigcup_{i=1}^m \psi_i \circ \mathcal F\right) \text{ is finite, }
\]
the set \eqref{eq:theset_vojta} is contained in the union of a Zariski-non-dense set with the images of $k$-rational points by these finitely many maps.
\end{theorem}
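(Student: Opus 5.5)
The plan is to reduce every point of \eqref{eq:theset_vojta} to a point $Q=\chi(P)$ with $\chi\in\mathcal F$, and then to apply Vojta's conjecture to $Q$ with the divisor $D_i^{(\text{nc})}$. By (iv), each $\phi\in\mathcal F\setminus\{\id\}$ can be written as $\phi=\psi_i\circ\chi$ for some $i$ and some $\chi\in\mathcal F$. Set $Q=\chi(P)\in X(k)$, so that $\phi(P)=\psi_i(Q)$. Since there are only finitely many $i$, it suffices to treat one fixed $i$. The Zariski-closed set we produce for that $i$ is $\psi_i(Z_i)$, where $Z_i$ is the exceptional set coming from Vojta. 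Its image is closed and proper provided $\psi_i$ is not dominant onto something larger. If $\psi_i$ is surjective (for an endomorphism, dominant implies surjective and finite on a projective variety of the same dimension), the image of a proper closed set is again proper closed. If $\psi_i$ is not dominant, its whole image is already a proper closed set, and we are done for that $i$.

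The key steps are as follows. First, apply functoriality and the decomposition $\psi_i^*D=D_i^{(\text{nc})}+D_i^{(\text{rest})}$ with $D_i^{(\text{rest})}$ effective. Since effective divisors have local heights bounded below by constants $C_v$ (zero for almost all $v$), we get
\[
\sum_{v\notin S}\lambda_v(D,\psi_i(Q))\ \ge\ \sum_{v\notin S}\lambda_v(D_i^{(\text{nc})},Q)-C .
\]
Second, combine this with \eqref{eq:htdecompose} for $D_i^{(\text{nc})}$:
\[
\sum_{v\in S}\lambda_v(D_i^{(\text{nc})},Q)\ \ge\ h(D_i^{(\text{nc})},Q)-\epsilon\, h(A,\psi_i(Q))-C'
\]
for $Q$ with $\psi_i(Q)$ in the set. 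Third, bound $h(A,\psi_i(Q))\le c_i h(A,Q)+C''$; this holds since $\psi_i^*A$ is dominated by a multiple of the ample $A$. Fourth, apply Conjecture~\ref{conj:vojta} with divisor $D_i^{(\text{nc})}$ and a small $\delta$. Adding $h(K,Q)$ gives
\[
h(D_i^{(\text{nc})}+K,Q)\le (\epsilon c_i+\delta)h(A,Q)+C'''
\]
for $Q\notin Z_{i,\delta}$. Fifth, use bigness from (iii): by Kodaira's lemma, $D_i^{(\text{nc})}+K\ge \eta A$ minus an effective divisor $E_i$. Hence $h(D_i^{(\text{nc})}+K,Q)\ge \eta h(A,Q)-O(1)$ off $|E_i|$. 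Choosing $\epsilon,\delta$ with $\epsilon c_i+\delta<\eta$ forces $h(A,Q)$ bounded for $Q\notin Z_{i,\delta}\cup|E_i|$. Northcott then leaves only finitely many such $Q$, which can be absorbed into the closed set.

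There is a subtlety: $Q$ ranges over $X(k)$, since $\chi(P)$ is $k$-rational. So Vojta's conjecture applies directly to $Q$, and we never need $P$ itself. The finitely many $Q$ of bounded height produce finitely many points $\psi_i(Q)$, which is harmless. The conclusion is that the set \eqref{eq:theset_vojta} is contained in $\bigcup_i \psi_i\big(Z_{i,\delta}\cup|E_i|\cup\{\text{finite}\}\big)$, which is Zariski-non-dense.

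For the variant hypothesis, the finitely many exceptional maps $\phi\notin\bigcup_i\psi_i\circ\mathcal F$ contribute exactly the sets $\phi(X(k))$. The remaining $\phi$ are handled as above.

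I expect the main obstacle to be bookkeeping rather than ideas. One issue is ensuring that $\epsilon$ can be chosen uniformly: it depends only on the finitely many $\psi_i$, through $c_i$ and $\eta_i$, and not on $\chi$, which is exactly why the decomposition (iv) is used. The other issue is justifying that the images $\psi_i(Z)$ remain proper closed subsets.
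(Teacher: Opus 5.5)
Your proposal is correct and follows essentially the same route as the paper: write $\phi=\psi_i\circ\eta$, pull the local heights back to $Q=\eta(P)\in X(k)$, and discard the effective $D_i^{(\text{rest})}$. You then bound $h(A,\psi_i(Q))\le c_i\,h(A,Q)+O(1)$ and play the bigness lower bound for $h(D_i^{(\text{nc})}+K,Q)$ against Vojta's inequality for $D_i^{(\text{nc})}$, pushing the exceptional set forward by $\psi_i$. The only difference is that you spell out two points the paper leaves implicit: the Northcott step that disposes of the bounded-height $Q$ hidden in the $O(1)$ terms, and the fact that $\psi_i(Z)$ is a proper closed subset (by properness and a dimension count).
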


From the proof, it will become clear how small $\epsilon$ has to be, and we will explore this in Example \ref{example}.  As noted in the introduction, a similar result was obtained by the first author \cite[Theorem 5.1]{M3} using Evertse--Ferretti result \cite{EF} in place of Vojta's conjecture. Note that in one-dimension, the set of $P\in\pp^1(k)$ for which $\phi(P)$ is quasi-integral is finite by Siegel's theorem as long as $\phi$ has at least three poles.  As a result, for $\pp^1$, we conclude that \eqref{eq:theset_vojta} is finite as long
\[
\mathcal F \setminus \left(\{\id\} \cup \bigcup_{i=1}^m \psi_i \circ \mathcal F\right)
\]
is finite and each rational function in this set has at least three poles.  An analog of Siegel's theorem is not unconditionally known in higher-dimensions, and we typically have to assume conditions like (i)--(iii) even under Vojta's conjecture.

\begin{proof}[Proof of Theorem \ref{thm:basedonvojta}]
Since $D_i^{(\text{nc})}+K$ is big, there exists a proper Zariski-closed set $Z_1$ and a positive constant $c_i'$ such that
\begin{equation}\label{eq2}
h(D_i^{(\text{nc})} + K, Q) \ge c_i' h(A, Q) - O(1)
\end{equation}
for all $Q\notin Z_1$, where we use Landau's big $O$ notation.
Now suppose $\phi(P)$ belongs to the set \eqref{eq:theset_vojta} for some $\phi\in \mathcal F$.  We may assume that $\phi = \psi_i \circ \eta$ for some $\eta \in \mathcal F$ and $i\le m$.  By the functoriality of heights,
\begin{align}
\sum_{v\notin S} &\lambda_v(D, \phi(P)) \sim h(D, \psi_i\circ \eta(P)) - \sum_{v\in S} \lambda_v(D,\psi_i\circ  \eta(P)) \notag\\
&\sim h(\psi_i^*(D), \eta(P))  - \sum_{v\in S} \lambda_v(\psi_i^*(D), \eta(P)) \notag\\
&\sim \sum_{v\notin S} \lambda_v(\psi_i^*(D), \eta(P)) \label{eq1}
\end{align}
where we use $\sim$ to indicate that the difference of both sides is a  bounded function away from a Zariski-non-dense subset of $X(k)$.  In addition, because of ampleness, there exist $c_i''$ such that
\begin{equation}\label{eq3}
h(\psi_i^*(A), Q) \le  c_i'' h(A, Q) + O(1)
\end{equation}
for all $Q$, so we have
\begin{equation}\label{eq4}
h(A, \psi_i\circ \eta(P)) \sim h(\psi_i^*(A), \eta(P)) \le c_i'' h(A, \eta(P)) + O(1).
\end{equation}
Applying \eqref{eq2} with $Q = \eta(P)$ and combining with \eqref{eq1}, \eqref{eq4}, and the condition in \eqref{eq:theset_vojta}, we have
\begin{align*}
\epsilon & c_i'' h(A, \eta(P))\\
&\ge \epsilon h(\psi_i^*(A), \eta(P)) - O(1)\\
&\ge \epsilon h(A, \psi_i \circ \eta(P)) - O(1)\\
&\ge \sum_{v\notin S} \lambda_v(D, \psi_i \circ \eta(P)) - O(1)\\
&\ge \sum_{v\notin S} \lambda_v(\psi_i^*(D), \eta(P)) - O(1)\\
&\ge \sum_{v\notin S} \lambda_v(D_i^{(\text{nc})}, \eta(P)) - O(1),
\end{align*}
so
\begin{align}
&\sum_{v\in S} \lambda_v(D_i^{(\text{nc})}, \eta(P)) + h(K, \eta(P)) \sim h(D_i^{(\text{nc})}+K, \eta(P)) - \sum_{v\notin S} \lambda_v(D_i^{(\text{nc})}, \eta(P))\notag\\
&\ge c_i' h(A, \eta(P)) - \epsilon c_i'' h(A, \eta(P)) - O(1).\notag
\end{align}
Therefore, whenever $\epsilon$ is small enough so that $c_i' - \epsilon c_i'' >0$ for all $i$, Vojta's conjecture applied to the divisor $D_i^{(\text{nc})}$ implies that there exists a proper Zariski-closed set $Z_2$ such that $\eta(P) \in Z_2$.  This argument shows that if $\psi_i(\eta(P))$ is in the set \eqref{eq:theset_vojta}, $\psi_i(\eta(P)) \in \psi_i(Z_1) \cup \psi_i(Z_2)$.
\end{proof}

\begin{remark}\label{remark:integralpoint}
Since the set of $(D,S)$-integral points satisfy $\sum_{v\notin S} \lambda_v(D, P) \le C$ for some constant $C$, this theorem shows Zariski-non-density of integral points in semi-group orbit.  More generally, for quasi-$(D,S)$-integral points, namely, for $P$ satisfying
\[
\sum_{v\notin S} \lambda_v(D, P) < \epsilon ' h(D, P)
\]
then $A$ being ample means that there exists a constant $C$ such that $h(D, P) \le C h(A, P)$ for all $P$, so we must have
\[
\sum_{v\notin S} \lambda_v(D, P) < \epsilon ' C h(A, P).
\]
Hence for $\epsilon ' = \frac {\epsilon}C$, such $P$ lies in \eqref{eq:theset_vojta}.
\end{remark}

We include a quick unconditional example of Theorem \ref{thm:basedonvojta} together with its implication to Theorem \ref{thm:fggp}.

\begin{example}\label{example}
Let $\phi_1 = [F_0: \cdots : F_{N-1} : L_1 \cdots L_d]$ and $\phi_2 = [G_0: \cdots : G_{N-1} : M_1 \cdots M_d]$ be endomorphisms on $\pp^N$, where $F_i$ and $G_i$ are homogeneous polynomials of degree $d$, and where $L_1, \ldots, L_d$ and $M_1, \ldots, M_d$ are linear forms in general position, respectively.  If $d > N+1$ then for $D = (X_N = 0)$, $c_i'$ in the proof of Theorem \ref{thm:basedonvojta} can be taken to be $d-(N+1)$ while $c_i''$ to be $d$.  Therefore, if $d$ is large enough so that $\frac{d-(N+1)}d > \frac{1+c}2$, we have \eqref{eq:thesetfggp} unconditionally for sufficiently large $\epsilon$ by Theorem \ref{thm:basedonvojta} using Schmidt subspace theorem in place of Vojta's conjecture, and it follows that the set \eqref{eq:thesetofthm} and the set \eqref{eq:thesetofthm2} are Zariski-non-dense from Theorems \ref{thm:fggp} and \ref{thm:fggp2} respectively.

Similarly, assume that $\phi_1 = [X_1 \cdots X_N F_0: F_1 : \cdots : F_{N}]$ and $\phi_2 = [G_0: \cdots : G_{N}]$ are endomorphisms on $\pp^N$ with $F_0$ of degree $d-N$, $F_1, \ldots, F_N$ are each product of $d$ linear forms so that their total union is in general position, and where $G_1, \ldots, G_{N}$ are each products of $e$ linear forms whose total union is in general position and $G_0$ is a product of $e$ linear forms in general position. Then $(\iter {\phi_1} 2)^*(X_0 = 0)$ contains the union of $dN$ hyperplanes in general position, $(\phi_1 \circ \phi_2)^*(X_0 = 0)$ contains the union of $eN$ hyperplanes in general position, and $\phi_2^*(X_0=0)$ is the union of  $e$ hyperplanes in general position.  Since
\[
\mathcal F \setminus \{\id\} = \{\phi_1\} \cup (\iter {\phi_1} 2\circ \mathcal F) \cup (\phi_1 \circ \phi_2\circ \mathcal F) \cup (\phi_2 \circ \mathcal F),
\]
as long as $dN$ and $e$ are sufficiently large (so this may work with a smaller $d$ than the situation of the previous paragraph) and integral points in $\phi_1(\pp^N(k))$ are known to be Zariski-non-dense, \eqref{eq:thesetfggp} holds unconditionally for a large enough $\epsilon$ so that the set \eqref{eq:thesetofthm} and the set \eqref{eq:thesetofthm2} are both Zariski-non-dense.
\end{example}

\textbf{Acknowledgements}: This project was initiated during the conference ``Complex Dynamics and Related Topics" at RIMS of Kyoto University and continued during the AIM workshop ``Dynamics of Multiple Maps." Jorge Mello was supported by the ARC Grant DP180100201 and Oakland University FCT and BFA grants, while Yu Yasufuku is supported by JSPS Kakenhi 19K03412, 24K06696, and 25H00587.

\bibliographystyle{amsplain}
\bibliography{multdep}

\end{document}